\documentclass[12pt, reqno]{article}

\usepackage[usenames]{color}
\usepackage{amssymb}
\usepackage{amsmath}
\usepackage{amsthm}
\usepackage{amsfonts}
\usepackage{amscd}
\usepackage{graphicx}
\usepackage{hyperref}
\usepackage{mathtools}
\usepackage{fullpage}
\usepackage{comment}

\theoremstyle{plain}
\newtheorem{theorem}{Theorem}

\newtheorem{lemma}[theorem]{Lemma}

\theoremstyle{definition}

\theoremstyle{remark}
\newtheorem{remark}[theorem]{Remark}

\def\N{\mathbb{N}}

\def\Z{\mathbb{Z}}

\def\F{\mathbb{F}}

\author{Luis H. Gallardo \\
Univ. Brest, \\
UMR CNRS 6205, \\
Laboratoire de Math\'ematiques de Bretagne Atlantique,\\
6, Av. Le Gorgeu, C.S. 93837, Cedex 3, F-29238 Brest, \\
France \\
{\tt Luis.Gallardo@univ-brest.fr}\\
 AMS 2010: Primary 11T55, Secondary 11T06,\\
  05A15, 11B13.\\
Keywords: Sums of polynomials, linear factors, characteristic $2$.\\
Running Head: Binary Polynomials}

\title{Splitting sums of binary polynomials}

\begin{document}

\maketitle

\begin{abstract}
We study an analogue of a classical arithmetic problem over the ring of polynomials. We prove that $m = 5$ is the minimal number such that the sums of any two distinct polynomials in a set of $m$ polynomials over $\F_2[x]$ cannot all be of the form $x^k(x+1)^{\ell}$.
\end{abstract}

\newpage

\section{Introduction}

It is easy to see that there exist distinct integers $a, b$ such that $a + b$ is a power of $2$, e.g., $a = 3$, $b = 5$. Moreover, there exist distinct integers $a, b, c$ such that all pairwise sums $a + b$, $a + c$, and $b + c$ are powers of $2$, e.g., $a = -1$, $b = 3$, $c = 5$. However, M. S. Smith \cite[sequence A352178]{oeis} proved that there does not exist a set of four integers $\{a, b, c, d\}$ such that $a + b$, $b + c$, $c + d$, and $d + a$ are all powers of $2$. Consequently, no four distinct integers $a, b, c, d$ exist such that all six pairwise sums among them are powers of $2$.

In this paper, we investigate a polynomial analogue of the above problem. We replace an integer $n$ with a polynomial $A(x)$ having coefficients in $\{0,1\}$ and operate in the field $\F_2 = \{0,1\}$, where the rule $1+1 = 0$ replaces the usual $1+1 = 2$. We call such polynomials \emph{binary polynomials}, and the set of them forms the ring $\F_2[x]$. Some basic computations in this ring are shown after Remark~\ref{computs}.
 
The ring $\F_2[x]$ serves as a natural analogue of the integers $\Z$, and certain arithmetic problems become more tractable in this context. We arbitrarily consider the polynomial $x^a(x+1)^b \in \F_2[x]$ as a polynomial analogue of $2^{a+b} \in \Z$. This analogy is motivated by the fact that $2$ is the smallest prime number in $\Z$, while $x$ and $x+1$ are the irreducible polynomials of smallest degree in $\F_2[x]$. Moreover, we cannot associate $2$ with $x$ alone-ignoring $x+1$-since the rings $\F_2[x]$ and $\F_2[x+1]$ are essentially the same. 
In this paper, we focus on the following problem in the ring $\F_2[x]$, inspired by the integer case above.

Given a positive integer $m$, let $S_m$ be a set of $m$ binary polynomials such that the sum of each pair of distinct elements in $S_m$ splits over $\F_2$. That is, the sum is of the form $x^k(x+1)^\ell$ for some non-negative integers $k, \ell$, not both zero.

For $m = 2$, the answer is straightforward: choose any $a \in \F_2[x]$ and let $b = a + x^k(x+1)^\ell$. Then $S_2 = \{a, b\}$ satisfies the conditions for any $(k, \ell) \neq (0, 0)$. The goal of this paper is to investigate what happens for $m > 2$.

Our main results are as follows:

\begin{theorem}
\label{Nsedaka}
Assume that $a, b, c \in \F_2[x]$ satisfy
\begin{equation}
\label{tres}
a + b, \quad a + c, \quad \text{and} \quad b + c \quad \text{split over } \F_2.
\end{equation}
That is, 
\[
a + b = x^{a_1}(x+1)^{b_1}, \quad 
a + c = x^{a_2}(x+1)^{b_2}, \quad 
b + c = x^{a_3}(x+1)^{b_3},
\]
with $(a_j, b_j) \neq (0, 0)$, and $a_1 \leq a_2 \leq a_3$.

Then (up to switching $x$ and $x+1$), the following holds:
 
\begin{equation}
\label{Rtres1}
b = a + x^{a_1}(x+1)^{b_2 + 2^s}, \quad 
c = a + x^{a_1+2^s}(x+1)^{b_2}
\end{equation}
for some non-negative integer $s$;

\end{theorem}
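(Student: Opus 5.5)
The plan rests on the characteristic-$2$ identity $(a+b)+(a+c)=b+c$. With the notation of the statement, the three sums satisfy the $S$-unit type equation
\[
x^{a_1}(x+1)^{b_1}+x^{a_2}(x+1)^{b_2}=x^{a_3}(x+1)^{b_3}
\]
in $\F_2[x]$. The proof then amounts to solving $u+v=w$ where $u,v,w$ are nonzero products of powers of $x$ and $x+1$, and translating back via $b=a+(a+b)$ and $c=a+(a+c)$.

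\textbf{Step 1 (reduction to a coprime equation).} Let $g=x^{\alpha}(x+1)^{\beta}$ be the gcd of the three terms, so $\alpha=\min a_j$ and $\beta=\min b_j$. Dividing by $g$ gives $A+B=C$ with $A,B,C$ again of the form $x^i(x+1)^j$. Any irreducible factor dividing two of $A,B,C$ divides the third, because each term is the sum of the other two. Hence $A,B,C$ are pairwise coprime. So $x$ divides at most one of them and $x+1$ divides at most one of them. Since there are three terms, at least one of them equals $1$.

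\textbf{Step 2 (solving the coprime equation).} Up to permuting the three terms, which is harmless because $u+v=w$ is symmetric in characteristic $2$, we may take one term to be $1$. If another term were also $1$, the third would be $0$, which is impossible. So the remaining two terms are $x^i$ and $(x+1)^j$ with $i,j\ge 1$, or else one of them is $1$, which we have excluded. We are left with $1+x^i=(x+1)^j$. By Lucas' theorem, $(x+1)^j$ has exactly $2^{\mathrm{wt}(j)}$ nonzero terms, where $\mathrm{wt}(j)$ is the number of ones in the binary expansion of $j$. Having exactly two terms forces $j=2^s$, and then $(x+1)^{2^s}=x^{2^s}+1$ gives $i=2^s$. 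I expect this to be the only genuinely arithmetic point, and Lucas' theorem (equivalently, the Frobenius identity) handles it cleanly.

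\textbf{Step 3 (translating back).} The three sums are therefore $g$, $g\,x^{2^s}$ and $g\,(x+1)^{2^s}$ in some order. The remaining work is bookkeeping, which I expect to be the main nuisance rather than a real obstacle. One must match which of $a+b$, $a+c$, $b+c$ is which, using $a_1\le a_2\le a_3$ and the allowed exchange of $x$ with $x+1$ (together with the symmetric roles of $b$ and $c$), to bring the configuration to
\[
a+b=g\,(x+1)^{2^s},\qquad a+c=g\,x^{2^s},\qquad g=x^{a_1}(x+1)^{b_2}.
\]
This gives exactly $b=a+x^{a_1}(x+1)^{b_2+2^s}$ and $c=a+x^{a_1+2^s}(x+1)^{b_2}$, that is, \eqref{Rtres1}. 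Conversely, these choices satisfy \eqref{tres}, since $b+c=g\bigl(x^{2^s}+(x+1)^{2^s}\bigr)=g$.
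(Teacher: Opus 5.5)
Your Steps 1 and 2 are correct, and they take a more elementary route than the paper. The paper orders the exponents (Lemma~\ref{reduction}) and runs the case analysis of Lemma~\ref{ACEsept}. That reduces everything to $(x+1)^A+x^B=x^C$, which Lemma~\ref{ABC} settles using Canaday's Lemma~\ref{L5Canaday}. Your gcd and pairwise-coprimality argument needs no ordering and gives the unordered classification directly: the three sums are $g$, $gx^{2^s}$, $g(x+1)^{2^s}$. Counting the $2^{\mathrm{wt}(j)}$ terms of $(x+1)^j$ replaces Canaday's lemma with a one-line observation.

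The gap is Step 3: the matching you postpone as bookkeeping cannot reach the configuration you announce. Among $g$, $gx^{2^s}$, $g(x+1)^{2^s}$, only $gx^{2^s}$ has $x$-adic valuation $\alpha+2^s$; the other two have valuation $\alpha$.
\begin{itemize}
\item The hypothesis $a_1\le a_2\le a_3$ therefore forces $b+c=gx^{2^s}$, so $a+c\in\{g,\,g(x+1)^{2^s}\}$ and never $a+c=gx^{2^s}$.
\item The symmetries you list do not help. Swapping $b$ and $c$ fixes the edge $b+c$. The switch $x\leftrightarrow x+1$ fixes the edge carrying $g$ (the sum dividing the other two), and that edge is $a+b$ or $a+c$, whereas your target puts $g$ on $b+c$.
\item Your own converse check exposes the clash: it gives $b+c=x^{a_1}(x+1)^{b_2}$, i.e.\ $a_3=a_1<a_1+2^s=a_2$.
\end{itemize}
Concretely, take $a=0$, $b=x^2+x$, $c=x$. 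This satisfies all hypotheses with $(a_1,b_1)=(1,1)$, $(a_2,b_2)=(1,0)$, $(a_3,b_3)=(2,0)$. The formula for $b$ forces $s=0$, and then the displayed formula would give $c=x^2\neq x$.

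Doing the matching correctly (swapping $b$ and $c$ if needed so that $b_1\ge b_2$, which preserves the ordering since $a_1=a_2$) gives exactly what the paper's proof derives: $a_2=a_1$, $b_3=b_2$, $a_3=a_1+2^s$, $b_1=b_2+2^s$. That is, $b=a+x^{a_1}(x+1)^{b_2+2^s}$ and $c=a+x^{a_1}(x+1)^{b_2}$, with $b+c=x^{a_1+2^s}(x+1)^{b_2}$.

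The exponent $a_1+2^s$ in the formula for $c$ in \eqref{Rtres1} can be reached only by also interchanging $a$ and $b$, which breaks $a_2\le a_3$. So the display appears to be a misprint. You should have carried out the matching explicitly and flagged this, rather than asserting that the bookkeeping lands on the display.
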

 
\begin{theorem}
\label{Nsedaka1}
\begin{itemize}
\item[{\rm (i)}]
Let $a, b, c, d \in \F_2[x]$ be such that all pairwise sums
\[
a + b, \quad a + c, \quad a + d, \quad b + c, \quad b + d, \quad c + d
\]
split over $\F_2$. That is, each sum equals $x^{a_j}(x+1)^{b_j}$ with $(a_j, b_j) \neq (0, 0)$ for $j = 1, \dots, 6$.

Then (up to switching $x$ and $x+1$), either:
\begin{equation}
\label{Rtres21}
b = a + (x+1)^{2^t} T(1,3), \quad 
c = a + (x+1)^{2^{t-1}} T(1,3), \quad 
d = a + T(1,3)
\end{equation}
where $T(1,3) = x^{a_1}(x+1)^{b_3}$ and $t$ is a non-negative integer, or
\begin{equation}
\label{Rtres22}
b = a + (x^{2^{t_1}} + 1) T(1,3), \quad 
c = a + (x^{2^{t_1}} + x^{2^{t_1 - 1}}) T(1,3), \quad 
d = a + x^{2^{t_1}} T(1,3)
\end{equation}
for some non-negative integer $t_1$.

\item[{\rm (ii)}]
For any $m > 4$, there do not exist distinct binary polynomials $k_1, \dots, k_m$ such that $k_i + k_j$ splits over $\F_2[x]$ for all $i \ne j$.
\end{itemize}
\end{theorem}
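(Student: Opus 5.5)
Dividing by the common factor reduces everything to the three-term identity
\[
u + v = w, \qquad u,v,w \text{ of the form } x^{k}(x+1)^{\ell},
\]
and I would prove both parts by feeding triples of the four polynomials into Theorem~\ref{Nsedaka}, which describes such identities completely.

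For part (i), put $u=a+b$, $v=a+c$, $w=a+d$. Then $b+c=u+v$, $b+d=u+w$ and $c+d=v+w$, so $\{u,v,w\}$ is a set of three distinct nonzero splitting polynomials whose pairwise sums also split. Each triple $\{a,b,c\}$, $\{a,b,d\}$, $\{a,c,d\}$ therefore satisfies the hypothesis of Theorem~\ref{Nsedaka}. Applied to $\{a,b,c\}$ (after reordering and possibly swapping $x$ and $x+1$), the theorem gives $u = x^{\alpha}(x+1)^{\beta+2^s}$ and $v = x^{\alpha+2^s}(x+1)^{\beta}$. Writing $T = x^{\alpha}(x+1)^{\beta}$, this means $u = T(x+1)^{2^s}$, $v = T x^{2^s}$ and $u+v = T$. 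The reason is that $(x+1)^{2^s}+x^{2^s}=1$ in characteristic $2$; this Frobenius identity is the underlying source of all solutions.

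Next I bring in $w$ and write $w = T x^{e}(x+1)^{f}$ with $e,f \in \mathbb{Z}$, possibly negative. Applying the same theorem to $\{u,w\}$ and to $\{v,w\}$ says that each of $T(x+1)^{2^s}+w$ and $Tx^{2^s}+w$ must again be a Frobenius triple. Comparing the $x$-adic and $(x+1)$-adic valuations of both sides should force the exponents into a few cases. The relation $2^{t}=2\cdot 2^{t-1}$ shows up through identities such as $(x+1)^{2^t}+(x+1)^{2^{t-1}} = (x+1)^{2^{t-1}}x^{2^{t-1}}$, and this is why the two families \eqref{Rtres21} and \eqref{Rtres22} appear. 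In each surviving case I rename so that $T(1,3)$ is the common factor and read off $b,c,d$. I expect this case analysis to be the main obstacle. The theorem fixes normalizations ($a_1\le a_2\le a_3$, the $x\leftrightarrow x+1$ swap) separately for each triple, and these must be reconciled across the three triples without missing cases. My first step would be to divide by the $\gcd$ so that one of $u,v,w$ equals $1$, and then to classify all pairs $p,q$ of splitting polynomials with $p$, $q$, $p+1$, $q+1$ and $p+q$ all splitting, using the degree and the valuations at $0$ and $1$.

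For part (ii), suppose $k_1,\dots,k_5$ are distinct with all pairwise sums splitting. By part (i), every $4$-subset has the rigid shape \eqref{Rtres21} or \eqref{Rtres22}. I set $u_i = k_1 + k_i$ for $i=2,\dots,5$. The quadruples $\{k_1,k_2,k_3,k_4\}$ and $\{k_1,k_2,k_3,k_5\}$ share the pair $u_2,u_3$, and in the classification $u_2,u_3$ determine the third element up to at most the finitely many options listed. I would check that the options compatible with $\{u_2,u_3\}$ cannot supply two distinct values $u_4\ne u_5$ whose sum $u_4+u_5$ also splits. In practice this means listing, in the normalized form, the third elements that complete a given Frobenius pair, then verifying that any two of them sum to something with an irreducible factor other than $x$ and $x+1$, for example a factor $x^2+x+1$ coming from $x^{2^t}+x^{2^{t-1}}+1$-type sums. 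This contradiction proves (ii).
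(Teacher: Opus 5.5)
\paragraph{There is a genuine gap, and it cannot be closed as you sketch it, because the dichotomy in (i) that your case analysis aims at is false.}
Take $a=0$, $b=x$, $c=x^2$, $d=x^2+x$. The six pairwise sums are $x$, $x^2$ and $x(x+1)$, each occurring twice, so all of them split with $(a_j,b_j)\neq(0,0)$. For this set $a+b+c+d=0$. In contrast, both \eqref{Rtres21} and \eqref{Rtres22} give $a+b+c+d=T(1,3)\,(x^{2^t}+x^{2^{t-1}}+1)\neq 0$ (with $t_1$ in place of $t$ for the second). Whether $a+b+c+d$ vanishes is unaffected by relabeling and by $x\leftrightarrow x+1$, so this set is in neither family.

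More generally, $\{a,\ a+T,\ a+Tx^{2^s},\ a+T(x+1)^{2^s}\}$ with $T\neq 1$ split is a third family: an affine plane, $d=a+b+c$. Your own setup exhibits it. With $u=T(x+1)^{2^s}$ and $v=Tx^{2^s}$, the choice $w=T$ (i.e.\ $e=f=0$) gives $u+w=Tx^{2^s}$ and $v+w=T(x+1)^{2^s}$. Likewise, $p=x^{2^s}$, $q=(x+1)^{2^s}$, $p+q=1$ solves your normalized problem. So no valuation argument can leave only the two stated families.

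Carried out correctly, with $h=2^s$, your analysis gives exactly the possibilities
$w\in\{T,\ Tx^{h}(x+1)^{h},\ Tx^{h/2}(x+1)^{h/2},\ Tx^{-h}(x+1)^{2h},\ Tx^{2h}(x+1)^{-h}\}$, where the third requires $h\ge 2$. The last four give, after relabeling, the sets in \eqref{Rtres21}--\eqref{Rtres22}; $w=T$ does not.

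The paper's proof has the same blind spot. In each of its sixteen cases, the first-listed sum of every triple is the one carrying the extra factor $(x+1)^{2^s}$. So only two of the six arrangements of $\{T,Tx^{2^s},T(x+1)^{2^s}\}$ are considered per triple. This excludes affine planes, where all four triples have the same three sums.

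Separately, dividing by the gcd does not by itself make one of $u,v,w$ equal to $1$. For $\{0,Tx^h,T(x+1)^h,Tx^h(x+1)^h\}$, none of the three equals their gcd $T$ until you move the base point to $Tx^h$.

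\paragraph{Consequence for (ii).}
In (ii), the step ``by part (i), every $4$-subset has the rigid shape \eqref{Rtres21} or \eqref{Rtres22}'' is not available. A completion list read off from (i) would omit $w=T$.

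Your strategy is nevertheless sound once the list is corrected:
\begin{itemize}
\item Relabel $k_1,k_2,k_3$ so that $k_2+k_3$ equals the gcd $T$ of their three sums. Then $u_2=T(x+1)^h$ and $u_3=Tx^h$.
\item The elements $u_4\neq u_5$ must both lie in the five-element list above.
\item A direct check shows that the sum of any two members is $T\,x^{i}(x+1)^{j}$ with $i,j\in\Z$, times a positive power of $x^2+x+1$. For example, $T+Tx^h(x+1)^h=T(x^2+x+1)^h$ and $Tx^{-h}(x+1)^{2h}+Tx^{2h}(x+1)^{-h}=Tx^{-h}(x+1)^{-h}(x^2+x+1)^h$.
\end{itemize}
Hence $u_4+u_5$ never splits. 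This proves (ii) for $m=5$, and so for all $m>4$, without the paper's computer search. It works only if the affine completion $w=T$ is included, which is exactly the case that (i) as stated omits.
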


\begin{remark}
\label{computs}
Before solving the case $m = 4$, we checked (by computer) that there are no solutions for $m = 5$ when the polynomials $a, b, c, d, e \in \F_2[x]$ have degrees at most $9$.
\end{remark}

Theorem~\ref{Nsedaka} is proved in Section~\ref{dos}, and Theorem~\ref{Nsedaka1} in Section~\ref{tres}. The tools used in the proofs are introduced in Section~\ref{sec:tools}.

We define $\sigma(A)$ to be the sum of all divisors of $A \in \F_2[x]$, including $1$ and $A$ itself. For instance:
\[
\sigma(0) = 0, \quad 
\sigma(1) = 1, \quad 
\sigma(x) = x + 1, \quad 
\sigma(x^2) = x^2 + x + 1, \quad 
\sigma(x^2 + x) = x^2 + x,
\]
\[
\sigma(x^2 + x + 1) = 1 + x^2 + x + 1 = x^2 + x.
\]
Here is why: the divisors of $x^2$ are $1$, $x$, and $x^2$, summing to $1 + x + x^2$. Since $x^2 + x + 1$ is irreducible over $\F_2$, its only divisors are $1$ and itself, so the sum is $1 + (x^2 + x + 1) = x^2 + x$. Similarly, if $P$ is irreducible, then
 $$
 \sigma(P^k) = 1 + P + \cdots + P^k.
 $$ 
 Also, if $A$ and $B$ are coprime polynomials in  $\F_2[x]$, then 
 $$
 \sigma(A B) = \sigma(A) \cdot \sigma(B).
 $$ Thus,
\begin{equation}
\label{perf1}
\sigma(x(x+1)) = \sigma(x) \cdot \sigma(x+1) = (x+1)x = x(x+1).
\end{equation}

A polynomial $A \in \F_2[x]$ is called \emph{Mersenne} if $A = x^a(x+1)^b + 1$ for some $a, b \in \N$. This is a polynomial analogue of the Mersenne number $2^{a+b} - 1$. If $A$ is irreducible, we call it a \emph{Mersenne prime}.

\begin{remark}
\label{Mersennes}
A  Mersenne polynomial $1+x^a(x+1)^b$ is prime if and only if  the trinomial $x^{a+b}+x^b+1$  modulo $2$ is irreducible (see  \cite[Theorem 1.3]{Gall-Rahav8}, see also \cite{BrentLarvalaZ}). Moreover, they are also related to perfect polynomials (see \cite{Gall-Rahav8,Gall-Rahav2020}), i.e. to fixed points of the function $\sigma$ over $\F_2[x]$.
\end{remark}

\section{Tools}
\label{sec:tools}

The following lemma is taken from \cite[Lemma 5]{Canaday}.

\begin{lemma}
\label{L5Canaday}
Let \( P, Q \in \F_2[x] \) be such that \( P \) is irreducible, and let \( n, m \) be non-negative integers such that
\[
1 + P + \cdots + P^{2n} = Q^m.
\]
Then \( m \in \{0,1\} \).
\end{lemma}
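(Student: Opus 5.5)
The plan is to prove the contrapositive: assuming $m \ge 2$, derive a contradiction by comparing root multiplicities of $S := 1+P+\cdots+P^{2n}$ with the degree of $P'$. First the degenerate case. If $n=0$ then $S=1$, so $Q^m=1$ forces $Q=1$, and the statement is only meaningful for nonconstant $Q$. So we assume $n\ge 1$, which makes $S$ nonconstant, and $Q$ nonconstant. We work over an algebraic closure $\overline{\F_2}$.

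\textbf{Step 1 (factor $S$ through $P$).} Since $(1+P)S = 1+P^{2n+1}$, we have
$$S = f(P), \qquad f(Y) = \frac{Y^{2n+1}+1}{Y+1} = \prod_{\zeta} (Y-\zeta),$$
where $\zeta$ runs over the $(2n+1)$-th roots of unity different from $1$. Because $2n+1$ is odd, $Y^{2n+1}+1$ is separable, so these $\zeta$ are pairwise distinct. Hence
$$S = \prod_\zeta (P-\zeta),$$
and the factors $P-\zeta$ are pairwise coprime. Also $P' \neq 0$: otherwise $P$ would be a polynomial in $x^2$, hence a square in $\F_2[x]$, contradicting irreducibility. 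We have $\deg P' < \deg P$.

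\textbf{Step 2 (multiplicity bound).} Let $\alpha$ be a root of $S$, with multiplicity $e_\alpha$. Then $\alpha$ is a root of exactly one factor $P-\zeta$, and its multiplicity in that factor is $e_\alpha$. Differentiating, $(P-\zeta)' = P'$, so $\alpha$ is a root of $P'$ of multiplicity at least $e_\alpha-1$. Distinct roots contribute disjointly to the roots of $P'$, which gives
$$\sum_\alpha (e_\alpha - 1) \le \deg P' < \deg P.$$

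\textbf{Step 3 (use $S=Q^m$).} If $S=Q^m$ with $m\ge 2$, every root of $S$ has multiplicity $e_\alpha \ge 2$. So the number $r$ of distinct roots satisfies $r \le \tfrac12\sum_\alpha e_\alpha = \tfrac12 \deg S = n\deg P$. Therefore
$$\sum_\alpha (e_\alpha-1) = 2n\deg P - r \ge n \deg P \ge \deg P,$$
which contradicts Step 2. Hence $m\le 1$.

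The main point requiring care is Step 2 in characteristic $2$. There the multiplicity of $\alpha$ as a root of $P-\zeta$ need not be exactly one more than its multiplicity in $P'$. However, only the inequality "root of $P'$ with multiplicity at least $e_\alpha - 1$" is needed, and this always holds because $(x-\alpha)^{e}\mid g$ implies $(x-\alpha)^{e-1}\mid g'$. The other point to check is coprimality of the factors $P-\zeta$, which follows from the $\zeta$ being distinct.
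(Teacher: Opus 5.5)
Your proof is correct. It does not follow a route from the paper, because the paper gives no proof of this lemma: it imports it from \cite[Lemma 5]{Canaday}. So your argument is a self-contained replacement for a citation.

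The steps all check out:
\begin{itemize}
\item Over $\overline{\F_2}$ you factor $S=\prod_{\zeta}(P-\zeta)$. The factors are pairwise coprime because the $\zeta$ are distinct, which holds since $2n+1$ is odd.
\item From $(P-\zeta)'=P'$ you get $\sum_\alpha(e_\alpha-1)\le\deg P'<\deg P$, using $P'\neq 0$ for irreducible $P$.
\item If every $e_\alpha\ge 2$, then $\sum_\alpha(e_\alpha-1)\ge n\deg P$, a contradiction.
\end{itemize}
You are also right that in characteristic $2$ only the one-sided fact $(x-\alpha)^e\mid g\Rightarrow(x-\alpha)^{e-1}\mid g'$ is needed.

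Your route proves more than the cited statement. Irreducibility is used only to ensure $P$ is nonconstant with $P'\neq 0$, i.e.\ $P$ is not a square. Your argument therefore shows that for every nonconstant non-square $P\in\F_2[x]$ and $n\ge 1$, the sum $1+P+\cdots+P^{2n}$ has a simple root, hence an irreducible factor of exponent exactly one. This rules out $m$-th powers with $m\ge2$, and even powerful values. The non-square hypothesis is sharp: if $P=R^2$, then $1+P+\cdots+P^{2n}=(1+R+\cdots+R^{2n})^2$.

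You also correctly flag that the lemma as literally stated fails in the degenerate case $n=0$, $Q=1$, $m\ge 2$. This does not affect the paper, which applies the lemma only with $Q=x+1$.
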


The next lemma solves a simple exponential equation over \( \F_2[x] \).

\begin{lemma}
\label{ABC}
The only non-negative integers \( A, B, C \) satisfying
\begin{equation}
\label{xabc}
(x+1)^A + x^B = x^C
\end{equation}
in \( \F_2[x] \)
are either \( A = 2^s, B = 0, C = 2^s \) or \( A = 2^s, B = 2^s, C = 0 \), for some non-negative integer \( s \).
\end{lemma}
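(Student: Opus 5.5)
We first dispose of the degenerate cases and then reduce the general case to a statement about binomial coefficients mod $2$, namely Lucas' theorem. Note that the statement as written would also allow $A=B=C=0$, giving $1+1=1$, which is false. So $(0,0,0)$ is excluded, and we look for the non-trivial solutions.

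\emph{Degenerate cases.} If $A=0$, then $1+x^B=x^C$. Evaluating at $x=0$ gives $1+[B=0]=[C=0]$. If $B>0$ this forces $C=0$, and then $x^B=0$, which is impossible. If $B=0$ we get $0=x^C$, again impossible. So $A\ge 1$.

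\emph{Reduction to the case $B=0$ or $C=0$.} Evaluate the identity at $x=0$: the left side is $1+[B=0]$ and the right side is $[C=0]$. So exactly one of $B=0$ or $C=0$ holds; they cannot both be positive, since then we would get $1=0$.

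\emph{Case $B=0$.} Here $(x+1)^A+1=x^C$. Expanding,
\[
(x+1)^A+1=\sum_{k=1}^{A}\binom{A}{k}x^k .
\]
The coefficient of $x^A$ is $1$, so this must be the single monomial $x^A$, and $C=A$. This requires $\binom{A}{k}\equiv 0 \pmod 2$ for all $0<k<A$. By Lucas' theorem, $\binom{A}{k}$ is odd exactly when the binary digits of $k$ are a subset of those of $A$. If $A$ had two or more nonzero binary digits, taking $k=2^j$ for one of them would give a proper $k$ with $0<k<A$ and $\binom{A}{k}$ odd. Hence $A=2^s$, and conversely $(x+1)^{2^s}=x^{2^s}+1$ by the Frobenius map. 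This gives the solution $(2^s,0,2^s)$.

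\emph{Case $C=0$.} Here $(x+1)^A=x^B+1$. Comparing degrees gives $B=A$, and the same Lucas argument forces $A=2^s$. This gives the solution $(2^s,2^s,0)$.

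The only step that needs care is the parity argument for the binomial coefficients. It can also be done without Lucas, by writing $A=2^s u$ with $u$ odd. Then
\[
(x+1)^A=(x^{2^s}+1)^u ,
\]
and the coefficient of $x^{2^s(u-1)}$ in this expansion is $u\equiv 1 \pmod 2$. That term is nonzero and lies strictly between the constant term and the top term whenever $u>1$, which contradicts the identity. Hence $u=1$ and $A=2^s$.
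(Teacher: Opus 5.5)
Your proof is correct, but it handles the key step differently from the paper.

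After ruling out $A=0$, the paper assumes $A$ is not a power of $2$. It divides $(x+1)^A=x^C+1$ by $x+1$ to get $(x+1)^{A-1}=\sigma(x^{A-1})$, then invokes Canaday's lemma ($1+P+\cdots+P^{2n}=Q^m$ forces $m\in\{0,1\}$) to exclude odd $A$. For $A=2^s u$ it takes $2^s$-th roots to get $(x+1)^u=x^u+1$ and applies Canaday again. Only at the end does it read off $B,C$ from $x^{2^s}=x^B+x^C+1$.

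You replace this external lemma by Lucas' theorem, or, in your second version, by the observation that the coefficient $\binom{u}{u-1}=u$ of $x^{2^s(u-1)}$ in $(x^{2^s}+1)^u$ is odd. That is all that is really needed, since for odd $u>1$ it exhibits a middle term. What your version gains:
\begin{itemize}
\item It is self-contained.
\item It treats odd and even $A$ uniformly; your case $s=0$ is the paper's odd case.
\item Your evaluation at $x=0$ explicitly rules out $B,C\ge 1$. The paper leaves this implicit in the branch where $A$ is not yet known to be a power of $2$.
\end{itemize}
The paper's route buys only the link to $\sigma$ and to perfect and Mersenne polynomials that motivates the article, at the cost of a much heavier tool.

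A minor point: your opening remark about $(0,0,0)$ is unnecessary. The listed solutions have $A=2^s\ge 1$, so the statement never admits $(0,0,0)$, though checking that it is not a solution does no harm.
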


\begin{proof}
If \( A = 0 \), then \( 1 + x^B = x^C \), implying \( B = C \) and hence \( 1 + x^B = x^B \), which is a contradiction. So \( A \geq 1 \). Suppose \( A \) is not a power of \( 2 \). Then clearly \( B = 0 \) and \( C = 0 \) is not possible. If either \( B \) or \( C \) is zero, say \( B = 0 \) and \( C \geq 1 \), then
\begin{equation}
\label{starA1}
(x+1)^A = x^C + 1.
\end{equation}
Dividing both sides by \( x+1 \), we obtain
\begin{equation}
\label{starA}
(x+1)^{A-1} = \frac{x^C+1}{x+1} = \sigma(x^{C-1}).
\end{equation}
Taking degrees in \eqref{starA} gives \( C = A \), so we have
\begin{equation}
\label{starB}
(x+1)^{A-1} = \sigma(x^{A-1}).
\end{equation}
If \( A - 1 \) is even, Lemma \ref{L5Canaday} leads to the contradiction \( A - 1 = 1 \). Thus \( A \) must be even. Write \( A = 2^s u \) with \( s \geq 1 \) and \( u > 1 \) odd. Then \eqref{starA1} becomes
\begin{equation}
\label{starC}
((x+1)^u)^{2^s} = (x^u + 1)^{2^s},
\end{equation}
which implies
\begin{equation}
\label{starC1}
(x+1)^u = x^u + 1.
\end{equation}
Dividing by \( x+1 \) again, we get
\begin{equation}
\label{starC2}
(x+1)^{u-1} = \sigma(x^{u-1}),
\end{equation}
with \( u - 1 \) even. But this contradicts Lemma \ref{L5Canaday}.

Hence, \( A = 2^s \) for some \( s \geq 0 \). Substituting into \eqref{xabc}, we get
\begin{equation}
\label{starF}
(x+1)^{2^s} + 1 = x^B + x^C + 1,
\end{equation}
which simplifies to
\begin{equation}
\label{starG}
x^{2^s} = x^B + x^C + 1.
\end{equation}
Therefore, \( B = 0, C = 2^s \) or \( C = 0, B = 2^s \), as desired.
\end{proof}

A \emph{Sidon} sequence or set is a sequence \( S = \{s_0, s_1, s_2, \ldots\} \) of natural numbers in which all pairwise sums \( s_i + s_j \) with \( i \leq j \) are distinct.

The following elementary lemma, whose proof is left to the reader, is useful. In particular, it implies the well-known result that the set of powers of 2 forms a Sidon subsequence in \( \Z \) (see \cite[Section C9]{Guy}).

\begin{lemma}
\label{powers2}
The only non-negative integers \( A, B, C \) satisfying
\begin{equation}
\label{xxabc}
2^A = 2^B + 2^C
\end{equation}
are those with \( A = B + 1 = C + 1 \).
\end{lemma}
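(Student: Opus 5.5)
The plan is to use the uniqueness of binary expansions of non-negative integers, after first checking the small cases by hand. Without loss of generality assume $B \le C$, since the equation \eqref{xxabc} is symmetric in $B$ and $C$.

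\emph{Case $B = C$.} Then $2^B + 2^C = 2^{B+1}$. Since $n \mapsto 2^n$ is injective, $2^A = 2^{B+1}$ forces $A = B+1 = C+1$, which is the asserted solution.

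\emph{Case $B < C$.} We derive a contradiction. Write
\[
2^B + 2^C = 2^B\bigl(1 + 2^{C-B}\bigr).
\]
Here $C - B \ge 1$, so $1 + 2^{C-B}$ is an odd integer with $1 + 2^{C-B} \ge 3$. On the other hand, $2^A$ has no odd divisor larger than $1$. Comparing the odd parts of the two sides of \eqref{xxabc}, which is legitimate by unique factorization in $\Z$, gives $1 = 1 + 2^{C-B}$, which is impossible.

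As an alternative to the odd-part comparison, one can argue directly from binary expansions. When $B \neq C$, the number $2^B + 2^C$ has exactly two nonzero binary digits, while $2^A$ has exactly one. This contradicts the uniqueness of base-$2$ representation.

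No step here is a real obstacle. The only care needed is to split into the cases $B = C$ and $B \ne C$, and to observe that when $B \ne C$ the factor $1 + 2^{C-B}$ is odd and greater than $1$. From the lemma, the Sidon property of the powers of $2$ follows immediately: if $2^i + 2^j = 2^k + 2^l$, then comparing binary digits gives $\{i,j\} = \{k,l\}$.
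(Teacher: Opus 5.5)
Your proof is correct: the case $B=C$ gives exactly the solutions $A=B+1=C+1$, and for $B<C$ the odd factor $1+2^{C-B}\ge 3$ in $2^B(1+2^{C-B})$ rules out equality with $2^A$. Your closing Sidon remark is also fine, since the carry cases $i=j$ or $k=l$ are exactly what the lemma handles. The paper gives no argument here (it explicitly leaves the proof to the reader), so your case split with the odd-part comparison, or equivalently the binary-digit comparison, is the standard elementary proof one would fill in.
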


\begin{lemma}
\label{reduction}
Let \( A, B, C, D, E, F \) be non-negative integers such that
\begin{equation}
\label{xabcdef}
x^A(x+1)^B + x^C(x+1)^D = x^E(x+1)^F
\end{equation}
holds in \( \F_2[x] \). Then, after reordering and possibly swapping terms, we may assume \( A \leq C \leq E \). Moreover, if \( C = A \), we may also assume \( B \geq D \).
\end{lemma}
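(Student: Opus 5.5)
Lemma~\ref{reduction} is a normalization statement, so the plan is to use only the symmetries of \eqref{xabcdef}. Since we work in characteristic $2$, the identity $U+V=W$ with $U=x^A(x+1)^B$, $V=x^C(x+1)^D$, $W=x^E(x+1)^F$ is equivalent to $U+W=V$ and to $V+W=U$. So the three monomials play completely symmetric roles: any one of them may be taken as the "right-hand side", and the other two may be listed in either order on the left. This symmetry group is the full $S_3$ acting on the three pairs $(A,B)$, $(C,D)$, $(E,F)$, and it is the only tool we need.

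The first step is to sort by the exponent of $x$. Among the three pairs, call $(E,F)$ the one with the largest $x$-exponent; if there are ties, pick any pair attaining the maximum. Move this pair to the right-hand side. Then order the remaining two so that the first has the smaller $x$-exponent. After this relabelling we have $A\le C\le E$.

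The second step handles the tie $A=C$. If $A=C$, the two left-hand terms can still be swapped without disturbing $A\le C\le E$. We therefore swap them if necessary so that $B\ge D$. This uses only the commutativity of addition on the left, so the inequalities already obtained are preserved.

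There is no real obstacle here. The only point to state clearly is that "reordering and possibly swapping terms" means moving terms across the equals sign, which is legitimate because $-1=1$ in $\F_2$. One could also note the consequences: $A\le C\le E$ gives $x^A\mid x^E(x+1)^F$, and $A=C$ together with $B=D$ is impossible because then the left side is $0$. These remarks are not needed for the lemma itself.
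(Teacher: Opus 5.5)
Your proposal is correct and follows essentially the same route as the paper. Both use that in characteristic $2$ the identity is equivalent to the vanishing of the sum of the three terms, relabel so that $A \le C \le E$, and, when $C = A$, swap the two left-hand terms to obtain $B \ge D$; the paper also divides by $x^A$ to write the tied case as $(x+1)^B + (x+1)^D = x^{E-A}(x+1)^F$, but that step is not needed for the swap itself.
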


\begin{proof}
We can write \eqref{xabcdef} as
\begin{equation}
\label{letters}
x^A(x+1)^B + x^C(x+1)^D + x^E(x+1)^F = 0.
\end{equation}
By relabeling, we may assume \( A \leq C \leq E \). If \( C = A \), then
\eqref{xabcdef} becomes
\begin{equation}
\label{letters1}
(x+1)^B + (x+1)^D = x^{E-A}(x+1)^F,
\end{equation}
so that we may further assume \( B \geq D \).
\end{proof}

The following lemma plays a key role in our proofs. Moreover, it shows that the set of split polynomials in \( \F_2[x] \) is far from being a Sidon set; that is, the sums \( a + b \) with \( a, b \in S \) and \( a \neq b \) are not all distinct.

\begin{lemma}
\label{ACEsept}
Let \( A, B, C, D, E, F \) be non-negative integers such that
\begin{equation}
\label{xabcdef1}
x^A(x+1)^B + x^C(x+1)^D = x^E(x+1)^F
\end{equation}
in \( \F_2[x] \), with \( A \leq C \leq E \). If \( C = A \), then \( B \geq D \).

Then \( \min(B, D, F) = D \). Moreover,
   
     \( B \neq D \), \( C = A \), \( F = D \), and:
    
         \( B - D = E - A = 2^s \) for some non-negative integer \( s \).
       
\end{lemma}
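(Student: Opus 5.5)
The strategy is to reduce to Lemma~\ref{ABC} by cancelling common powers of $x$ and $x+1$. Write the equation as
\[
x^A(x+1)^B + x^C(x+1)^D = x^E(x+1)^F, \qquad A\le C\le E .
\]
Since $A\le C\le E$, we divide by $x^A$ and get
\[
(x+1)^B + x^{C-A}(x+1)^D = x^{E-A}(x+1)^F .
\]
Let $m=\min(B,D,F)$ and divide by $(x+1)^m$. Because $\F_2[x]$ is a domain, the resulting identity has the same shape:
\[
(x+1)^{B'} + x^{c}(x+1)^{D'} = x^{e}(x+1)^{F'},
\]
with $c=C-A$, $e=E-A$, $0\le c\le e$, and at least one of $B',D',F'$ equal to $0$.

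\textbf{Step 1: evaluate at $x=0$ and $x=1$.} These evaluations constrain which exponents can vanish.
\begin{itemize}
\item At $x=0$: the left side is $1+[c=0]$ and the right side is $[e=0]$. If $c>0$, then $e\ge c>0$ and we get $1=0$, a contradiction. Hence $c=0$, i.e.\ $C=A$.
\item Still at $x=0$, with $c=0$: the left side is now $1+1=0$, so $e>0$, i.e.\ $E>A$.
\item At $x=1$: the left side is $[B'=0]+[D'=0]$ and the right side is $[F'=0]$. So exactly two of $B',D',F'$ are zero, or none is.
\end{itemize}
Since at least one of $B',D',F'$ is zero, exactly two of them are zero.

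\textbf{Step 2: determine which two exponents vanish.} The hypothesis $C=A$ now gives $B\ge D$, so $B'\ge D'$.
\begin{itemize}
\item If $B'=D'=0$, the equation reads $1+1=x^e(x+1)^{F'}$. This is impossible because the right side is nonzero.
\item If $B'=F'=0$, then $B'\ge D'$ forces $D'=0$ as well. That is the case just excluded.
\item Hence $D'=F'=0$ and $B'>0$.
\end{itemize}
This gives $\min(B,D,F)=D=F$ and $B\ne D$. The equation has become $(x+1)^{B'}+1=x^{e}$.

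\textbf{Step 3: apply Lemma~\ref{ABC}.} The equation is $(x+1)^{B'}+x^0=x^e$, which is Lemma~\ref{ABC} with $(A,B,C)$ there equal to $(B',0,e)$. The lemma gives $B'=2^s$ and $e=2^s$, that is, $B-D=E-A=2^s$.

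The only real obstacle is the bookkeeping of the normalization: we must check that $B\ge D$ is legitimately available, which it is only after Step 1 has shown $C=A$. We must also check that Lemma~\ref{ABC} applies verbatim, including its degenerate exponents. Everything else is evaluation at the two roots $0$ and $1$.
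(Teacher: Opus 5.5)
Your proof is correct, and it takes a different route from the paper's. The paper first argues $\min(B,D,F)=D$ by excluding $m=B<D$ and $m=F<D$. It then splits into $B=D$ versus $B>D$ and, within the latter, $C=A$ versus $C\neq A$, using degree comparisons and divisibility by powers of $x$ and $x+1$; the fact $C=A$ only appears at the end. You instead evaluate the normalized identity at the two roots $0$ and $1$. Evaluation at $0$ gives $C=A$ and $E>A$ at once. Evaluation at $1$ gives the parity statement that an even number of $B',D',F'$ vanish, hence exactly two.

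Getting $C=A$ first is what makes the normalization $B\ge D$ available, and that selects $D'=F'=0$. This ordering buys more than brevity. In the paper's Step~1, the two excluded cases are attributed to Lemma~\ref{ABC} and to the claim that a sum of two nonzero split polynomials cannot be a monomial. Without the normalization, both fail. Take $1+(x+1)^{2^s}=x^{2^s}$, i.e.\ $A=B=C=F=0$, $D=E=2^s$. It satisfies $A\le C\le E$ and has $\min(B,D,F)=B=F<D$, and it is itself one of the solutions Lemma~\ref{ABC} allows. Those cases are excluded only because $C=A$ forces $B\ge D$, which is exactly how your argument uses the hypothesis.

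Both proofs finish the same way, applying Lemma~\ref{ABC} to $(x+1)^{B-D}+1=x^{E-A}$. You might say explicitly there that the lemma's second alternative is ruled out because the exponent of the middle term is $0$.
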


\begin{proof}
Assume that \eqref{xabcdef1} holds with \( A \leq C \leq E \), and if \( C = A \), then \( B \geq D \). Let us write
\[
P := x^A(x+1)^B, \quad Q := x^C(x+1)^D, \quad R := x^E(x+1)^F.
\]

We proceed by dividing the equation
\[
P + Q = R
\]
by the appropriate power of \( x+1 \) to reduce the minimal exponent among \( B, D, F \) to zero.

\textbf{Step 1: Normalize by minimal power of \( x+1 \).}

Let \( m = \min(B, D, F) \). Dividing both sides of the equation by \( (x+1)^m \), we obtain
\begin{equation}
\label{eq:reduced}
x^A(x+1)^{B - m} + x^C(x+1)^{D - m} = x^E(x+1)^{F - m}.
\end{equation}
We claim that \( m = D \). Suppose not.

\textbf{Case 1: \( m = B < D \).} Then in \eqref{eq:reduced}, the left-hand side becomes
\[
x^A + x^C(x+1)^{D - B}.
\]
Since \( D - B \geq 1 \), this is a sum of two split polynomials with distinct powers of \( x+1 \). But the right-hand side \( x^E(x+1)^{F - B} \) is also a split polynomial. This contradicts Lemma \ref{ABC}, which classifies when such a sum equals a single split polynomial. Hence, this case cannot occur.

\textbf{Case 2: \( m = F < D \).} Then the right-hand side becomes \( x^E \), a monomial. But then the left-hand side is a sum of two split polynomials, which again cannot equal a monomial unless one of them is zero, which is excluded by assumption. Thus, this case also leads to a contradiction.

Therefore, we must have \( m = D \), as claimed.

\textbf{Step 2: Analyze according to whether \( B = D \) or \( B \neq D \).}

We now consider \eqref{xabcdef1}
with \( D \) minimal among \( B, D, F \). Divide both sides by \( (x+1)^D \), yielding
\begin{equation}
\label{eq:step2}
x^A(x+1)^{B - D} + x^C = x^E(x+1)^{F - D}.
\end{equation}

\textbf{Subcase 1: \( B = D \).}

Then \eqref{eq:step2} becomes
\begin{equation}
\label{eq:step2A}
x^A + x^C = x^E(x+1)^{F - D}
\end{equation}
so that $A < C \leq E$. Taking degrees into  \eqref{eq:step2A} we obtain that
\[
C = E + (F - D) \geq E.
\]
Thus, $C=E$, and $F=D$. Hence, \eqref{eq:step2A} gives the contradiction 
\[
x^A+x^C = x^C.
\]
In other words, the case $B=D$ does not happen.

\textbf{Subcase 2: \( B > D \).}

Then \eqref{eq:step2} becomes
\[
x^A(x+1)^{B - D} + x^C = x^{E}(x+1)^{F - D}.
\]

 Now two cases arise:

\begin{itemize}
    \item If \( C = A \), then \eqref{eq:step2} becomes
    \begin{equation}
    \label{eq:beefe}
    (x+1)^{B - D} + 1 = x^{E - A}(x+1)^{F - D}.
    \end{equation}
    
    We claim that $B - D > F -D \geq 0$. Otherwise, \eqref{eq:beefe} gives the contradiction
  \[
    (x+1)^{B - D} \mid 1.
    \]
    
    Thus, \eqref{eq:beefe} implies that $(x+1)^{F - D} \mid 1$, so that $F = D$.
    Therefore,  \eqref{eq:beefe} becomes
    \begin{equation}
    \label{eq:paraL6}
     1 = x^{E - A} + (x+1)^{B - D}.
    \end{equation}

   Apply Lemma \ref{ABC} to  \eqref{eq:paraL6}, which implies  \( B - D = E - A = 2^s \) for some \( s \geq 0 \).

   \item If \( C \neq A \), then we can write \eqref{eq:step2} as follows:
   
   \begin{equation}
    \label{eq:aete}
     x^A \left( (x+1)^{B-D} + x ^{C-A} \right) = x^{E} (x+1)^{F - D}.
    \end{equation}
    
   Comparing the exponent of $x$ in both sides of \eqref{eq:aete} we reach the contradiction $A = E$. In other words, the case $C \neq A$ does not happen.
 
\end{itemize}
This completes the analysis of all cases, and hence the proof.
\end{proof}

\section{Proof of Theorem \ref{Nsedaka}}
\label{dos}

By adding $a + b$ to $a + c$, we obtain the identity
\begin{equation}
\label{dos1}
x^{a_1}(x+1)^{b_1} + x^{a_2}(x+1)^{b_2} = x^{a_3}(x+1)^{b_3}.
\end{equation}

By Lemma \ref{reduction}, we may assume that if $a_2 = a_1$, then $b_1 \geq b_2$.  
Apply Lemma \ref{ACEsept} with $A = a_1$, $B = b_1$, $C = a_2$, $D = b_2$, $E = a_3$, and $F = b_3$. We obtain that $\min(b_1, b_2, b_3) = b_2$. Moreover: $a_2=a_1$,  $b_3=b_2$, $a_3 =a_1 + 2^s$, and  $b_1 =b_2 + 2^s$ for some integer $s \geq 0$. 
This completes the proof.
\hfill $\Box$

\section{Proof of Theorem \ref{Nsedaka1}}
\label{tres}

To prove (i), we proceed as in the proof of Theorem \ref{Nsedaka}. The proof is divided into three parts: Part (a), Part (b), and Part (c). In Part (a), we list all sixteen cases to consider. In Part (b), we give a detailed proof of the two cases that hold. In Part (c), we prove that two of the remaining fourteen cases do not occur. The analysis of the other twelve cases (which also do not occur) is similar and therefore omitted. The computational verification of Part (ii) completes the proof of the theorem.

\textbf{Part (a).} The list $L$ of the $16$ cases to consider is as follows:
\begin{align}
\label{sixteen1}
L &= \{[1A,2A,3A,4A], [1B,2A,3A,4A], [1A,2B,3A,4A], [1A,2A,3B,4A]\} \cup \\
\label{sixteen2}
&\quad \{[1A,2A,3A,4B], [1B,2B,3A,4A], [1B,2A,3B,4A], [1B,2A,3A,4B]\} \cup \\
\label{sixteen3}
&\quad \{[1A,2B,3B,4A], [1A,2B,3A,4B], [1A,2A,3B,4B], [1A,2B,3B,4B]\} \cup \\
\label{sixteen4}
&\quad \{[1B,2A,3B,4B], [1B,2B,3A,4B], [1B,2B,3B,4A], [1B,2B,3B,4B]\},
\end{align}
where
\begin{itemize}
\item
1A: $a_2 = a_1$, $a_4 = a_1 + 2^s$, $b_1 = b_2 + 2^s$, $b_4 = b_2$.\\
2A: $a_3 = a_1$, $a_5 = a_1 + 2^t$, $b_1 = b_3 + 2^t$, $b_5 = b_3$.\\
3A: $a_5 = a_4$, $a_6 = a_4 + 2^u$, $b_4 = b_5 + 2^u$, $b_6 = b_5$.\\
4A: $a_3 = a_2$, $a_6 = a_2 + 2^v$, $b_2 = b_3 + 2^v$, $b_6 = b_3$.
\item
1B: $a_2 = a_1 + 2^{s_1}$, $a_4 = a_1$, $b_1 = b_2 + 2^{s_1}$, $b_4 = b_2$.\\
2B: $a_3 = a_1 + 2^{t_1}$, $a_5 = a_1$, $b_1 = b_3 + 2^{t_1}$, $b_5 = b_3$.\\
3B: $a_5 = a_4 + 2^{u_1}$, $a_6 = a_4$, $b_4 = b_5 + 2^{u_1}$, $b_6 = b_5$.\\
4B: $a_3 = a_2 + 2^{v_1}$, $a_6 = a_2$, $b_2 = b_3 + 2^{v_1}$, $b_6 = b_3$.
\end{itemize}

\textbf{Part (b).} Consider the case $[1A,2A,3B,4A]$, which leads to \eqref{Rtres21}. From $2A$ and $1A$, we obtain $a_5 - a_4 = 2^t - 2^s$, while $3B$ implies $a_5 - a_4 = 2^{u_1}$. Thus, $2^t = 2^s + 2^{u_1}$. By \eqref{xabc}, we obtain $s = t-1$ and $u_1 = t - 1$. From $1A$ and $2A$, we have $b_2 + 2^s = b_3 + 2^t$, hence $b_2 - b_3 = 2^t - 2^s$. Now $4A$ implies $b_2 - b_3 = 2^v$, so that $2^t = 2^s + 2^v$. By \eqref{xabc}, we get $s = t - 1$ and $v = t - 1$. Putting everything together, we obtain the result.

Similarly, the other valid case $[1B,2B,3A,4B]$ leads to \eqref{Rtres22}.

\textbf{Part (c).} Consider the case $[1A,2A,3A,4A]$. From $1A$ and $2A$, we get $b_1 = b_2 + 2^s = b_3 + 2^t$, so that $b_2 - b_3 = 2^t - 2^s$. Meanwhile, $4A$ implies $b_2 - b_3 = 2^v$, and therefore $2^t = 2^s + 2^v$. By \eqref{xabc}, this yields
\begin{equation}
s = t - 1 \quad \text{and} \quad v = t - 1.
\label{star1}
\end{equation}
From $1A$, we have $a_4 - a_2 = 2^s$. From $3A$ and $4A$, we know $a_6 = a_4 + 2^u$ and $a_6 = a_2 + 2^v$, so that $a_4 - a_2 = 2^v - 2^u$, and thus \eqref{xabc} implies
\begin{equation}
u = v - 1 \quad \text{and} \quad s = v - 1.
\label{star2}
\end{equation}
Combining \eqref{star1} and \eqref{star2}, we obtain the contradiction $v = s = v - 1$. This proves that this case does not occur.

Now consider the case $[1B,2A,3A,4B]$. From $1B$, we get $a_2 - a_1 = 2^{s_1}$. From $4B$, we get $a_3 - a_2 = 2^{v_1}$. From $2A$, we also know $a_3 = a_1$, so that $a_1 - a_2 = 2^{v_1}$. Therefore, $0 = 2^{s_1} + 2^{v_1}$, a contradiction. This proves that this case also does not occur.

Similar arguments show that the remaining twelve cases do not occur. This proves Part (i) of the theorem.

The proof of Part (ii) follows from a straightforward computation using GP-PARI. We checked computationally (in a few seconds) that for $m = 5$, each of the $64$ possible cases contained at least one of the fourteen cases that do not occur (as shown in the proof of Part (i)).

This completes the proof of the theorem.
\hfill $\Box$

\section{Conclusion}

We translated the problem of distinct sums of powers of $2$ into the setting of distinct sums of certain binary polynomials. While the polynomial case required a more involved analysis, we obtained a complete solution using only elementary theoretical properties of these polynomials, along with some simple computer computations. The problem appears to be difficult—if not impossible—to solve using purely computational methods.

\section{Acknowledgments}
We thank the referee for valuable suggestions that led to a substantial simplification of the statements and proofs of Lemma \ref{ACEsept} and Theorem \ref{Nsedaka}.


\begin{thebibliography}{99}

\bibitem{BrentLarvalaZ} R. P. Brent, S. Larvala, and P. Zimmermann,  
A fast algorithm for testing reducibility of trinomials mod 2 and some new primitive trinomials of degree 3021377,  
\textit{Math. Comput.} \textbf{72} (2003), 1443--1452.

\bibitem{Canaday} E. F. Canaday,  
The sum of the divisors of a polynomial,  
\textit{Duke Math. J.} \textbf{8} (1941), 721--737.

\bibitem{Gall-Rahav8} L. H. Gallardo and O. Rahavandrainy,  
On Mersenne polynomials over $\F_{2}$,  
\textit{Finite Fields Appl.} \textbf{59} (2019), 284--296.

\bibitem{Gall-Rahav2020} L. H. Gallardo and O. Rahavandrainy,  
A polynomial variant of perfect numbers,  
\textit{J. Integer Seq.} \textbf{23} (2020), Article 20.8.6, 9 pp.

\bibitem{Guy} R. K. Guy,  
\textit{Unsolved Problems in Number Theory}, 3rd ed., Springer-Verlag, 2004.

\bibitem{Rudolf} R. Lidl and H. Niederreiter,  
\textit{Finite Fields}, Encyclopedia of Mathematics and its Applications, Vol.~20,  
Cambridge University Press, 1996, xiv+755 pp.

\bibitem{oeis} N. J. A. Sloane et al.,  
\textit{The On-Line Encyclopedia of Integer Sequences}, published electronically at  
\url{https://oeis.org}, 2019.

\end{thebibliography}
\end{document}